\documentclass[11pt,a4paper]{article}
\usepackage[T1]{fontenc}
\usepackage{lmodern}
\usepackage{amsmath,amssymb,amsthm,mathtools}
\usepackage[margin=29mm]{geometry}
\usepackage{microtype}
\usepackage{enumitem}
\usepackage[hidelinks]{hyperref}
\hypersetup{pdftitle={Sharp lower bounds for the distribution function of the planar discrepancy function and the Lp-quasinorm for all p>0},pdfauthor={Aicke Hinrichs}}
\newtheorem{theorem}{Theorem}
\newtheorem{lemma}[theorem]{Lemma}
\newtheorem{proposition}[theorem]{Proposition}
\newtheorem{corollary}[theorem]{Corollary}
\theoremstyle{definition}

\theoremstyle{remark}
\newtheorem{remark}[theorem]{Remark}
\numberwithin{equation}{section}
\newcommand{\E}{\mathbb E}
\newcommand{\Prob}{\mathbb P}
\newcommand{\ind}{\mathbf 1}
\newcommand{\D}{\mathcal D}
\newcommand{\B}{\mathcal B}

\allowdisplaybreaks[1]

\title{A Sharp Distributional Lower Bound for Planar Discrepancy}

\author{Aicke Hinrichs\\
Jena\\
\small\href{mailto:aicke.hinrichs@gmail.com}
{\texttt{aicke.hinrichs@gmail.com}}}

\date{\today}

\begin{document}

\maketitle

\begin{abstract}
We prove that the unnormalized discrepancy function of every $n$-point set in the unit square has absolute value at least $c\sqrt{1+\log n}$ on a set of measure at least $\eta$, with absolute constants $c,\eta>0$. Consequently, its $L_p$-quasinorm is bounded below by
$c\,\eta^{1/p} \sqrt{1+\log n}$ for every $p>0$. 
\end{abstract}

\section{Introduction and main results}

Let $\mathcal P_n$ be a set of $n\ge1$ points in $[0,1]^2$. Its
unnormalized discrepancy function is
\begin{equation}\label{eq:disc-def}
 D_{\mathcal P_n}(x,y)
 =\#\bigl(\mathcal P_n\cap([0,x]\times[0,y])\bigr)-nxy.
\end{equation}
The main purpose of this note is to show that
$|D_{\mathcal P_n}|$ exceeds a constant multiple of
$\sqrt{1+\log n}$ on a set of uniformly positive measure.
To state this precisely, define, for $u>0$,
\[
 \Phi_{\mathcal P_n}(u)
   =\lambda_2\{(x,y):|D_{\mathcal P_n}(x,y)|\ge u\},
\]
where $\lambda_2$ denotes Lebesgue measure on $[0,1]^2$.
Throughout, $\log$ denotes the natural logarithm. 
Our main result is
\begin{theorem}\label{thm:disc}
There exist constants $c,\eta>0$ such that
for every $n\ge1$ and every $n$-point set $\mathcal P_n\subset[0,1]^2$,
\begin{equation}\label{eq:disc-result}
 \Phi_{\mathcal P_n} \left( c \sqrt{1+\log n} \right) \ge\eta.
\end{equation}
\end{theorem}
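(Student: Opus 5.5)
We use Roth's orthogonal function method in the Haar basis, combined with a Paley--Zygmund argument. Since $|D|\le n+1$ is bounded, it suffices to find a test function $F$ with $\|F\|_{L_\infty}\lesssim 1$ and $\langle D,F\rangle\gtrsim\sqrt{1+\log n}$, together with control of $\|D\|_{L_2}$. The catch is that the $L_2$ upper bound can be large for bad point sets, so a direct argument fails.

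The tool we will use instead is Halász's Riesz-product test function. Let $N=\lceil\log_2 (2n)\rceil$. For each $k=0,\dots,N$, consider dyadic rectangles of shape $2^{-k}\times 2^{-(N-k)}$. Let $f_k=\sum_R \varepsilon_R h_R$, where the sum runs over the empty rectangles of this shape (at least half of all $2^N$ of them) and $\varepsilon_R=\pm1$ is chosen so that $\langle D,f_k\rangle\gtrsim 1$. Set
\[
F=\prod_{k=0}^N\Bigl(1+\tfrac{i\gamma}{\sqrt N}f_k\Bigr)-1,
\]
with small $\gamma$. The standard estimates give $\|F\|_\infty\le e^{\gamma^2/2}+1$, and $\langle D,F\rangle\gtrsim \gamma\sqrt N$ in modulus, because the linear term contributes $\gamma\sqrt N$ and the higher products are controlled by Halász's hyperbolic-product estimate. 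This gives $\|D\|_1\gtrsim\sqrt N$.

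To upgrade to a distributional statement, we need $\langle D,F\rangle$ not to be carried by a small set where $D$ is huge. We split $D=D\ind_{\{|D|<u\}}+D\ind_{\{|D|\ge u\}}$ with $u=c\sqrt N$. The first piece contributes at most $u\|F\|_\infty\le Cc\sqrt N$, which is less than half of $\langle D,F\rangle$ for small $c$. Hence
\[
\int_{\{|D|\ge u\}}|D|\,|F|\gtrsim\sqrt N .
\]
Applying Hölder with $\|F\|_\infty$ bounded, it suffices to show that $\|D\ind_{\{|D|\ge u\}}\|_{L_1}\le C\sqrt N\,\Phi(u)^{1/2}$ or similar. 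That in turn needs an upper bound of the form $\|D\|_{L_2}\lesssim\sqrt N$, which is false in general.

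The main obstacle is therefore the unbounded large values of $D$ for bad sets. Our plan is to replace $D$ by a truncated or modified object adapted to the test function. Either we use $\langle \min(|D|,M)\operatorname{sgn}D,F\rangle$, or, preferably, we use a dichotomy. Suppose $\Phi(A\sqrt N)\ge\eta$ for a large $A$; then we are done directly. Otherwise, the set where $|D|\ge A\sqrt N$ is small, and we would like to show its contribution to $\langle D,F\rangle$ is small. For that we need an $L_2$ or exponential-integrability bound on $F$ rather than $L_\infty$: $|F|$ is bounded by a constant, so the contribution is at most $C\int_{\{|D|\ge A\sqrt N\}}|D|$, and this is not obviously small.

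The fallback is to take $M=A\sqrt N$ and use the pointwise lower bound $D\ge -nxy$. More usefully, large positive values of $D$ on a big-area set force many points into small regions, and this itself produces the discrepancy. Concretely, if $\int_{\{|D|>A\sqrt N\}}|D|$ were comparable to $\sqrt N$ while its support has measure less than $\eta$, we would localize Halász's argument to the complement. We do this by choosing the signs $\varepsilon_R$ and the rectangles so that $f_k$ vanishes on rectangles meeting the bad set, losing only a fraction of rectangles per level by a Markov/maximal-function argument. This localization is the step we expect to be hardest.
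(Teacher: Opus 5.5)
Your proposal has a genuine gap: what you actually prove is Halász's $L_1$ bound. The Riesz-product estimate and its consequence $\int_{\{|D|\ge u\}}|D|\,|F|\gtrsim\sqrt N$ are fine. But, as you note yourself, a lower bound for a linear pairing of $D$ with a bounded test function cannot give the distributional statement, since the pairing may be carried by a set of tiny measure on which $|D|$ is huge. The step meant to rule this out is the localization in your last paragraph. It is not carried out, and as sketched it does not work.

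For $F$ to vanish at a point $z\in E=\{|D|\ge A\sqrt N\}$ you need $f_k(z)=0$ for every $k$. So for each shape you must discard every rectangle that meets $E$, not just a fraction of them. Nothing you have used about $D$ prevents a set of measure less than $\eta$ from meeting every rectangle of every shape $2^{-k}\times2^{-(N-k)}$; many very thin horizontal strips at the heights $j2^{-N}$ do this. A Markov or strong maximal function count only controls the rectangles in which $E$ has large relative density. On the surviving rectangles $F\ne0$ on $E$, so $\int_E|D|\,|F|$ is exactly as uncontrolled as before. Truncating $D$ instead destroys the coefficient identity \eqref{eq:empty} and the shape-pairing bound \eqref{eq:shape-pairing}, on which both the linear and the higher-order Riesz-product estimates rest.

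The missing idea is to replace the linear pairing by a bounded nonlinear functional, the characteristic function. The paper rescales $f=-16D/\rho$ and decomposes it into three parts: coarse terms; regular sums $2^{-s}H_{\ell+s}$, where $H_\ell=\sum_i\xi_i\eta_i$ is a sum of independent Rademacher products; and corrections on nonempty rectangles with arbitrary, possibly huge, coefficients. It retains every $(K+1)$st pair $(\xi_i,\eta_i)$ and puts all other binary digits into an auxiliary variable $W$. Lemma~\ref{lem:averaging} then gives $|\E e^{itG}|^2\le\delta+(1-\delta)\kappa(t)^{2N'}$ with $N'=\lceil(\ell+1)/(K+1)\rceil$, where $G$ is $f$ with only the regular tail beyond $K$ finer levels removed.

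The mechanism is successive averaging over the $Y$-signs up to the last good position:
\begin{itemize}
\item at a good position the kernel $e^{itxy}$ contracts in $L_2$ by $\kappa(t)=\max\{|\cos t|,|\sin t|\}$;
\item at a bad position the squared modulus is preserved;
\item heredity of empty rectangles forces every bad correction straddling a good position to vanish.
\end{itemize}
Hence the bad and coarse coefficients never need to be estimated. Since $1-\cos(tG)\le2$, a small set where $|G|$ is huge contributes at most twice its measure to $\E(1-\cos tG)\ge(1-\delta)/10$. This is exactly the insensitivity to large values that your approach lacks. The removed tail has prescribed coefficients and is handled by Chebyshev in $L_2$.
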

An immediate consequence is the following lower bound for the
$L_p$-(quasi-)norm, defined for $p>0$ by
\[
  \|f\|_{L_p}
  =\left(\int_{[0,1]^2}|f|^p\,d\lambda_2\right)^{1/p}.
\]
\begin{corollary}\label{cor:lp}
For every $n\ge1$, every $n$-point set
$\mathcal P_n\subset[0,1]^2$, and every $p>0$,
\[
 \|D_{\mathcal P_n}\|_{L_p}
   \ge c\,\eta^{1/p}\sqrt{1+\log n},
\]
where $c$ and $\eta$ are the constants from
Theorem~\ref{thm:disc}.
\end{corollary}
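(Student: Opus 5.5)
The corollary follows from Theorem~\ref{thm:disc} by a Chebyshev--Markov argument; nothing beyond the distributional bound is needed. Fix $n\ge1$, an $n$-point set $\mathcal P_n$ and $p>0$, and set $u=c\sqrt{1+\log n}$. Let
\[
E=\{(x,y)\in[0,1]^2:|D_{\mathcal P_n}(x,y)|\ge u\}.
\]
By definition $\lambda_2(E)=\Phi_{\mathcal P_n}(u)$, and Theorem~\ref{thm:disc} gives $\lambda_2(E)\ge\eta$. The function $D_{\mathcal P_n}$ is a finite combination of indicator functions of rectangles and a polynomial, so it is bounded and measurable. Hence $E$ is measurable and the integral defining $\|D_{\mathcal P_n}\|_{L_p}$ is finite.

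The key step is the pointwise inequality $|D_{\mathcal P_n}|^p\ge u^p\,\ind_E$ on $[0,1]^2$. It holds because $t\mapsto t^p$ is increasing on $[0,\infty)$ for $p>0$, and the integrand is nonnegative off $E$. Integrating gives
\[
\int_{[0,1]^2}|D_{\mathcal P_n}|^p\,d\lambda_2\;\ge\;u^p\,\lambda_2(E)\;\ge\;u^p\,\eta .
\]
Taking $p$-th roots, which is again monotone for $p>0$, yields $\|D_{\mathcal P_n}\|_{L_p}\ge \eta^{1/p}u=c\,\eta^{1/p}\sqrt{1+\log n}$, as claimed.

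None of these steps is difficult. The only point to check is that the argument uses no convexity or triangle inequality, so it applies equally to the quasinorm range $0<p<1$. All of the difficulty sits in Theorem~\ref{thm:disc}, which the corollary assumes.
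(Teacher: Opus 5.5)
Your proposal is correct and is exactly the argument the paper has in mind: it calls the corollary an immediate consequence of Theorem~\ref{thm:disc}. Integrating the pointwise bound $|D_{\mathcal P_n}|^p\ge u^p\ind_E$ and taking $p$-th roots gives the claim. As you note, no convexity is used, so the argument covers $0<p<1$ as well.
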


For historical background and further references on discrepancy
bounds in various settings and dimensions, we refer to the survey
\cite{Bilyk}. Corollary~\ref{cor:lp} confirms
\cite[Conjecture~3.3]{Bilyk} in the planar case.

We briefly recall the classical results in the planar case. Roth proved the planar lower bound of order $\sqrt{1+\log n}$ for the $L_2$-norm in \cite{Roth}. His proof also gives the sharp lower bound of order $(1+\log n)^{\frac{d-1}{2}}$ for the $L_2$-norm in general dimension $d$. Schmidt extended this lower bound of order $\sqrt{1+\log n}$ to the $L_p$-norm in the range $1<p<2$ in \cite{Schmidt}, also with the corresponding lower bound in general dimension. 
For $p\ge2$, the same lower bound follows directly from Roth's
$L_2$ estimate by monotonicity of the $L_p$-norms.
Hal\'asz proved the corresponding estimate for the $L_1$-norm in \cite{Halasz}. His argument does not yield the corresponding lower bound of order
$(1+\log n)^{(d-1)/2}$ in dimensions $d>2$. Obtaining the corresponding $L_1$ lower bound of order
$(1+\log n)^{(d-1)/2}$ in dimensions $d\ge3$ remains
a major open problem in discrepancy theory.

The classical construction of Davenport in \cite{Davenport} gives a sequence of point sets in $[0,1]^2$ with discrepancy of order $\sqrt{1+\log n}$ for the $L_2$-norm. By monotonicity of the $L_p$-(quasi-)norms, the same point sets satisfy an upper bound of order $\sqrt{1+\log n}$ for every $0<p\le2$. This proves optimality for $0<p\le2$. For the remaining range $p>2$, Chen~\cite{Chen} proved
that, for every $n\ge1$, there exists an $n$-point set
$\mathcal P_n\subset[0,1]^2$ satisfying
\[
 \|D_{\mathcal P_n}\|_{L_p}\le C_p\sqrt{1+\log n}.
\]
Thus Corollary~\ref{cor:lp} is optimal in its order of
growth in $n$ for every fixed $p>0$.
For a discussion of suitable modifications of the
van der Corput point set, see \cite[Section~6]{Bilyk}.
Similarly,
the $L_2$ upper bound and Chebyshev's inequality show that the scale
$\sqrt{1+\log n}$ in Theorem~\ref{thm:disc} cannot be replaced
by a function of larger order while retaining a uniform positive
lower bound for the measure.

The paper is organized as follows. In Section~\ref{sec:haar}, we recall necessary notation for the orthogonal system of Haar functions and summarize the needed properties of Haar coefficients of the discrepancy function. A key observation in lower bound proofs is that if the interior of a dyadic rectangle contains no point of $\mathcal P_n$,
then the corresponding Haar coefficient depends only on $n$
and the area of the rectangle. To keep the paper self-contained and clarify which ingredients are classical and which are new, we include concise versions of several known proofs.

A useful, though not entirely accurate, heuristic is to view the
planar discrepancy function as a sum of about $\log n$
independent Rademacher variables. The main technical problem is to make this heuristic precise and control the deviation from the idealized model. Section~\ref{sec:lower-bounds} revisits the classical lower bounds from this
probabilistic viewpoint, states the distributional estimate
for Haar polynomials in Theorem~\ref{thm:haar}, and uses it
to prove Theorem~\ref{thm:disc}.
Section~\ref{sec:averaging} establishes the averaging lemma,
Lemma~\ref{lem:averaging}, and then applies it to prove
Theorem~\ref{thm:haar}.

The classical arguments in Subsections~\ref{subsec:roth}--\ref{subsec:halasz} provide background
and motivation but are not needed for the proof of
Theorem~\ref{thm:disc}. The principal new ingredients are the
distributional estimate in Theorem~\ref{thm:haar} and the
averaging lemma, Lemma~\ref{lem:averaging}.

\section{Haar functions and discrepancy coefficients}
\label{sec:haar}

Let $\D$ be the collection of dyadic intervals
\[
I=[j2^{-a},(j+1)2^{-a}),\qquad
a\in\mathbb N_0,\quad j\in\{0,\ldots,2^a-1\}.
\]
For $I\in\D$, let $|I|$ denote its length and 
$I_{\mathrm{left}}$ and $I_{\mathrm{right}}$ 
its left and right halves. Define the Haar function with support $I$ by
\[
 h_I=\ind_{I_{\mathrm{right}}}-\ind_{I_{\mathrm{left}}}.
\]
For a dyadic rectangle $R=I\times J$, put
$h_R(x,y)=h_I(x)h_J(y)$ and $|R|=|I||J|=\lambda_2(R)$.
These tensor Haar functions are pairwise orthogonal and satisfy
\[
 \|h_R\|_2^2=|R|.
\]
For $f\in L_1([0,1]^2)$, we use the Haar expansion coefficients
\[
 a_R(f)=\frac{\langle f,h_R\rangle}{|R|}.
\]
Thus $a_R(f)$ is distinguished from the unnormalized inner product
$\langle f,h_R\rangle$.

Together with the constant function and the one-variable functions
$h_I(x)$ and $h_J(y)$, the functions $h_R$ form a complete
orthogonal system in $L_2([0,1]^2)$. In particular, every real
$f\in L_2([0,1]^2)$ has a decomposition
\begin{equation}\label{eq:haar-expansion}
 f(x,y)=u(x)+v(y)+\sum_{R\in\D\times\D}a_R(f)h_R(x,y),
\end{equation}
with convergence in $L_2$. Here $u,v\in L_2([0,1])$
contain the constant and one-variable components;
the allocation of the constant term does not matter.

For integers $k\ge0$ and $0\le a\le k$, define
\[
 \D_{k,a}=\{I\times J:I,J\in\D,\ |I|=2^{-a},\
                              |J|=2^{-(k-a)}\}.
\]
Each collection $\D_{k,a}$ partitions $[0,1)^2$ into
$2^k$ rectangles of area $2^{-k}$. We call $a$ the shape
index and $k$ the level. We retain the half-open convention
when assigning points to rectangles. Identities between
functions are understood almost everywhere; in particular,
their values on the union of all dyadic boundaries,
a set of Lebesgue measure zero, are immaterial.

\subsection{The coefficients of the discrepancy function}

The following well-known calculation separates the contribution of the points
from that of the volume term.

\begin{lemma}\label{lem:disc-coeff}
Let $R=I\times J$ be a dyadic rectangle and put
$\tau_I(t)=\int_t^1h_I(x)\,dx$. Then
\begin{equation}\label{eq:disc-coefficient}
 a_R(D_{\mathcal P_n})
 =\frac1{|R|}\sum_{p\in\mathcal P_n}
                 \tau_I(p_1)\tau_J(p_2)-\frac{n|R|}{16}.
\end{equation}
In particular, if the interior of $R$ contains no point of
$\mathcal P_n$, then
\begin{equation}\label{eq:empty}
 a_R(D_{\mathcal P_n})=-\frac{n|R|}{16}.
\end{equation}
\end{lemma}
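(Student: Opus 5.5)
The plan is to split $D_{\mathcal P_n}$ into its counting part and its volume part and compute the two inner products with $h_R$ separately. Write
\[
 D_{\mathcal P_n}(x,y)=\sum_{p\in\mathcal P_n}\ind_{[p_1,1]}(x)\,\ind_{[p_2,1]}(y)-nxy ,
\]
using that $p\in[0,x]\times[0,y]$ if and only if $x\ge p_1$ and $y\ge p_2$. Since $h_R(x,y)=h_I(x)h_J(y)$ factors, every inner product reduces to a product of one-dimensional integrals.

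For the point part, Fubini gives
\[
 \int\!\!\int \ind_{[p_1,1]}(x)\ind_{[p_2,1]}(y)\,h_I(x)h_J(y)\,dx\,dy=\tau_I(p_1)\tau_J(p_2)
\]
directly from the definition of $\tau_I$. For the volume part we need $\int_0^1 x\,h_I(x)\,dx$. With $I=[\alpha,\alpha+\ell)$, the right half contributes $\int x$ over $[\alpha+\ell/2,\alpha+\ell)$ and the left half subtracts the integral over $[\alpha,\alpha+\ell/2)$. Each half has length $\ell/2$, and their means differ by $\ell/2$, so the integral is $(\ell/2)(\ell/2)=\ell^2/4=|I|^2/4$. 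The same holds for $J$, so
\[
 \langle nxy,h_R\rangle=n|I|^2|J|^2/16=n|R|^2/16 .
\]
Dividing by $|R|$ gives \eqref{eq:disc-coefficient}.

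For \eqref{eq:empty}, observe that $\tau_I(t)=0$ whenever $t\notin\operatorname{int} I$. If $t\ge$ the right endpoint of $I$, the integral runs over a region outside the support of $h_I$. If $t\le$ the left endpoint, it covers all of $I$, and $\int h_I=0$. Endpoints are included by continuity of $\tau_I$. Hence the point $p$ contributes only if $p_1\in\operatorname{int} I$ and $p_2\in\operatorname{int} J$, that is, only if $p$ lies in the interior of $R$. If no point of $\mathcal P_n$ lies there, the sum vanishes and $a_R(D_{\mathcal P_n})=-n|R|/16$.

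Nothing here is a real obstacle. The only care needed is with boundary cases: checking that $\tau_I$ vanishes at the endpoints of $I$ (it is continuous and zero there), so that points on the boundary of $R$ contribute nothing and "interior" is the correct condition, and tracking the sign convention $h_I=\ind_{I_{\mathrm{right}}}-\ind_{I_{\mathrm{left}}}$ so that the $x$-moment comes out as $+|I|^2/4$.
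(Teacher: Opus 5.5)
Your proposal is correct and follows the paper's proof essentially step for step: Fubini on the factorized point contributions $\ind_{[p_1,1]}(x)\ind_{[p_2,1]}(y)$ gives $\tau_I(p_1)\tau_J(p_2)$, and the first moment $\int_0^1 x\,h_I(x)\,dx=|I|^2/4$ handles the volume term. The vanishing of $\tau_I$ outside the interior of $I$, endpoints included, then yields the empty-rectangle formula; your explicit checks of the moment and of the endpoint cases just fill in details the paper states without computation.
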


\begin{proof}
The contribution of $p=(p_1,p_2)$ to the counting function is
$\ind_{[p_1,1]}(x)\ind_{[p_2,1]}(y)$. Hence Fubini's theorem gives
\[
 \left\langle
 \ind_{[p_1,1]}\otimes\ind_{[p_2,1]},h_R
 \right\rangle=\tau_I(p_1)\tau_J(p_2).
\]
Moreover,
\[
 \int_0^1x h_I(x)\,dx=\frac{|I|^2}{4},\qquad
 \langle nxy,h_R\rangle=\frac{n|R|^2}{16}.
\]
Dividing the difference by $|R|$ proves
\eqref{eq:disc-coefficient}. The function $\tau_I$ vanishes outside
the interior of $I$, including at its endpoints. Consequently every
point contribution vanishes when the interior of $R$ is empty,
which proves \eqref{eq:empty}.
\end{proof}

Choose the unique integer $\ell\ge2$ such that
\begin{equation}\label{eq:n-ell}
 2^{\ell-2}\le n<2^{\ell-1},\qquad
 \rho=n2^{-\ell}\in[1/4,1/2).
\end{equation}
For each $k\ge\ell$ and $0\le a\le k$, let
\[
 \B_{k,a}
 =\{R\in\D_{k,a}:R\cap\mathcal P_n\ne\varnothing\}
\]
be the collection of bad rectangles. The remaining rectangles
in $\D_{k,a}$ are called good. Since rectangles of a fixed
shape and level are disjoint,
\begin{equation}\label{eq:disc-bad-count}
 \#\B_{k,a}\le n.
\end{equation}
In particular, the union of the bad rectangles of any principal
shape has measure at most $n2^{-\ell}<1/2$.
We use the half-open convention when designating rectangles
as good or bad, although the coefficient formula only requires
an empty interior.

A dyadic rectangle $Q$ is an ancestor of $R$ if $R\subseteq Q$;
equality is allowed, and the two side lengths may be enlarged
independently. Empty rectangles have only empty descendants.
Thus the good rectangles are hereditary: every descendant of
a good rectangle is good. Equivalently, every ancestor of a bad
rectangle among the levels under consideration is bad.

For the rescaled discrepancy function
\begin{equation}\label{eq:disc-rescaling}
 f=-\frac{16}{\rho}D_{\mathcal P_n},
\end{equation}
Lemma~\ref{lem:disc-coeff} yields
\begin{equation}\label{eq:disc-good-coeff}
 a_R(f)=2^{-s}
 \qquad
 \bigl(s\ge0,\quad
 R\in\D_{\ell+s,a}\setminus\B_{\ell+s,a}\bigr).
\end{equation}
The coefficients on bad rectangles need not satisfy this identity.

\subsection{The regular sums and the Rademacher model}

Define the regular sum at level $k$ by
\begin{equation}\label{eq:H-definition}
 H_k=\sum_{a=0}^k\sum_{R\in\D_{k,a}}h_R.
\end{equation}
Let $\varepsilon_j$ be the $j$th Rademacher function on $[0,1)$, with value
$-1$ when the $j$th binary digit of the argument is zero and $+1$ when it is one.
Summing the Haar functions over all rectangles of one shape gives
\begin{equation}\label{eq:H-rademacher}
 H_k(x,y)=\sum_{a=0}^k
              \varepsilon_{a+1}(x)\varepsilon_{k-a+1}(y).
\end{equation}
For fixed $k$, the summands involve disjoint pairs of independent
binary digits. They are therefore independent Rademacher variables
on the square equipped with Lebesgue probability measure. In
particular,
\begin{equation}\label{eq:H-energy}
 \|H_k\|_2^2=k+1.
\end{equation}
Different levels are orthogonal, although they need not be independent.

To identify precisely how the discrepancy function differs from
these regular sums, let $g$ consist of the constant and one-variable
components of $f$, together with its mixed Haar terms of levels
$k<\ell$. For $R\in\D_{\ell+s,a}$ put
$\beta_R=a_R(f)-2^{-s}$. The truncation of the expansion of $f$
at level $\ell+S$ can then be written as
\begin{equation}\label{eq:disc-truncated-decomp}
 \begin{split}
 f_S&=g+\sum_{s=0}^S2^{-s}H_{\ell+s}+\mathcal C_S,\\
 \mathcal C_S&=\sum_{s=0}^S\sum_{a=0}^{\ell+s}
                     \sum_{R\in\B_{\ell+s,a}}\beta_Rh_R.
 \end{split}
\end{equation}
Since $f\in L_2([0,1]^2)$, completeness and orthogonality imply
$f_S\to f$ in $L_2$ as $S\to\infty$.

The principal regular sum $H_\ell$ is exactly a sum of $\ell+1$
independent signs, and $\ell+1$ is comparable to $1+\log n$.
The remaining terms have different roles. The coarser part $g$
has a restricted dependence on the binary digits, the regular finer
levels have geometrically decreasing coefficients, and the
corrections $\mathcal C_S$ occur only on bad rectangles, whose ancestors among the levels under consideration are also bad. Their coefficients may be large, so they cannot simply
be discarded as a small error. These features underlie the distributional estimate stated in
Section~\ref{sec:lower-bounds} and proved in
Section~\ref{sec:averaging}.

\section{From classical lower bounds to a distributional estimate}
\label{sec:lower-bounds}

We first revisit the classical planar lower bounds of Roth, Schmidt,
and Hal\'asz \cite{Roth,Schmidt,Halasz}; see also \cite{Bilyk}.
The good rectangles at the principal Haar level supply the
essential contribution in each case. Orthogonality gives the
$L_2$ bound, the product Littlewood--Paley inequality yields
the $L_p$ bounds for $p>1$, and a bounded complex Riesz product
gives the $L_1$ bound. We then turn
from these norm estimates to the distributional statement needed
for Theorem~\ref{thm:disc}.

\subsection{Roth's bound by orthogonality}
\label{subsec:roth}

Keep the principal level $\ell$ and the number $\rho$ from
\eqref{eq:n-ell}, and put $m=\ell+1$.
For every good rectangle $R$ in the principal level,
Lemma~\ref{lem:disc-coeff} gives
\[
 a_R(D_{\mathcal P_n})=-\frac{\rho}{16}.
\]
Each principal shape contains at least $2^{\ell-1}$ good
rectangles. Hence orthogonality and Bessel's inequality yield
\[
 \|D_{\mathcal P_n}\|_2^2
 \ge \sum_{a=0}^{\ell}
      \sum_{R\in\D_{\ell,a}\setminus\B_{\ell,a}}
          |a_R(D_{\mathcal P_n})|^2\,|R|
 \ge \frac{\rho^2m}{512}.
\]
Since $\rho\ge1/4$, we obtain Roth's planar lower bound
\begin{equation}\label{eq:roth-planar}
 \|D_{\mathcal P_n}\|_2
 \ge\frac{\rho\sqrt m}{16\sqrt2}
 \ge\frac{\sqrt m}{64\sqrt2}.
\end{equation}

\subsection{Schmidt's bound via the Littlewood--Paley inequality}
\label{subsec:schmidt}

For $f\in L_p([0,1]^2)$, define its mixed dyadic square function by
\[
 Sf=\left(
       \sum_{R\in\D\times\D}|a_R(f)|^2\ind_R
     \right)^{1/2}.
\]
For every $1<p<\infty$, the product Littlewood--Paley
inequality gives a constant $A_p>0$ such that
\begin{equation}\label{eq:littlewood-paley}
 \|f\|_p\ge A_p\|Sf\|_p;
\end{equation}
see \cite[Section~3]{Bilyk}.
To account explicitly for the one-variable terms, let $E_x$ and
$E_y$ denote averaging in the respective coordinates and put
$P=(I-E_x)(I-E_y)$. Then $Sf=S(Pf)$, and the product
Littlewood--Paley inequality applied to $Pf$ gives
$\|Sf\|_p\lesssim_p\|Pf\|_p\le4\|f\|_p$,
which yields \eqref{eq:littlewood-paley}.

Let
\[
 M=\sum_{a=0}^{\ell}
       \sum_{R\in\D_{\ell,a}\setminus\B_{\ell,a}}\ind_R
\]
count the good principal rectangles containing a given point.
Since at least half of the rectangles of each shape are good,
\[
 0\le M\le m,\qquad \int_{[0,1]^2}M\,d\lambda_2\ge\frac m2.
\]
Consequently, the set $E=\{M\ge m/4\}$ has measure at least
$1/4$: indeed,
\[
 \frac m2\le\int M\,d\lambda_2
 \le\frac m4+m\lambda_2(E).
\]
On $E$, the good principal coefficients give
\[
 S(D_{\mathcal P_n})
 \ge\frac{\rho}{16}\sqrt M
 \ge\frac{\rho\sqrt m}{32}.
\]
Applying \eqref{eq:littlewood-paley}, we obtain
\begin{equation}\label{eq:schmidt-planar}
 \|D_{\mathcal P_n}\|_p
 \ge A_p\|S(D_{\mathcal P_n})\|_p
 \ge\frac{A_p\rho}{32}\,4^{-1/p}\sqrt m
 \gtrsim_p\sqrt{1+\log n}.
\end{equation}
This recovers Schmidt's planar lower bound for every
$1<p<\infty$. The restriction on $p$ enters only through
the Littlewood--Paley inequality.

\subsection{Hal\'asz's bound via a complex Riesz product}
\label{subsec:halasz}

For $0\le a\le\ell$, define
\[
 q_a=-\sum_{R\in\D_{\ell,a}\setminus\B_{\ell,a}}h_R.
\]
These functions satisfy $|q_a|\le1$. By
Lemma~\ref{lem:disc-coeff} and the count of good rectangles,
\begin{equation}\label{eq:classical-pairing}
 \langle D_{\mathcal P_n},q_a\rangle
 =\frac{\rho}{16}
   \sum_{R\in\D_{\ell,a}\setminus\B_{\ell,a}}|R|
 \ge\frac{\rho}{32}.
\end{equation}
Following Hal\'asz \cite{Halasz}, we combine these Haar
polynomials into a bounded complex Riesz product.

We first record an estimate for its higher-order terms.
For every $k\ge0$ and $0\le a\le k$,
\begin{equation}\label{eq:shape-pairing}
 \sum_{R\in\D_{k,a}}
       |\langle D_{\mathcal P_n},h_R\rangle|
 \le\frac{5n2^{-k}}{16}.
\end{equation}
Indeed, $0\le\tau_I\le|I|/2$, and each point contributes
to at most one rectangle of a given shape and level.
By \eqref{eq:disc-coefficient}, the point contributions
therefore sum to at most $n2^{-k}/4$, while the absolute
volume contributions sum to $n2^{-k}/16$.

For distinct indices $a_1<\cdots<a_r$, with $r\ge2$,
the product $q_{a_1}\cdots q_{a_r}$ is a Haar sum over
rectangles with side lengths $2^{-a_r}$ and
$2^{-(\ell-a_1)}$, with coefficients in $\{-1,0,1\}$.
To see this, expand the product and discard empty
intersections. On each remaining intersection, the finest
Haar function in each coordinate is multiplied only by
functions constant on its support. The intersections are
disjoint rectangles of the stated shape.
Consequently, \eqref{eq:shape-pairing} gives
\begin{equation}\label{eq:product-pairing}
 |\langle D_{\mathcal P_n},q_{a_1}\cdots q_{a_r}\rangle|
 \le\frac{5\rho}{16}\,2^{-(a_r-a_1)}.
\end{equation}

Set $\gamma=1/8$, $t=\gamma/\sqrt m$, and
\[
 \Psi=\operatorname{Im}\prod_{a=0}^{\ell}(1+itq_a).
\]
Since $|q_a|\le1$,
\begin{equation}\label{eq:riesz-bounded}
 \|\Psi\|_\infty
 \le(1+t^2)^{m/2}\le e^{\gamma^2/2}.
\end{equation}
The linear term is $t\sum_{a=0}^{\ell}q_a$, and its
pairing with the discrepancy function is at least
$\rho mt/32$ by \eqref{eq:classical-pairing}.
The remaining terms have odd degree at least three.

For a fixed span $d=a_r-a_1$, there are $m-d$ choices
of the endpoints and $\binom{d-1}{r-2}$ choices of the
interior indices. Thus \eqref{eq:product-pairing} yields
\begin{align*}
 \left|\left\langle D_{\mathcal P_n},
        \Psi-t\sum_{a=0}^{\ell}q_a\right\rangle\right|
 &\le\frac{5\rho}{16}
   \sum_{d=1}^{m-1}(m-d)2^{-d}
       \sum_{r=3}^{d+1}\binom{d-1}{r-2}t^r\\
 &\le\frac{5\rho m}{16}\,t^2
   \sum_{d=1}^{\infty}2^{-d}
       \bigl((1+t)^{d-1}-1\bigr)\\
 &=\frac{5\rho m}{16}\frac{t^3}{1-t}.
\end{align*}
Here we bounded the sum over odd degrees by the sum
over all degrees at least three.
Since $t\le1/8$,
\[
 \frac{10t^2}{1-t}\le\frac5{28}<\frac12.
\]
It follows that
\[
 \langle D_{\mathcal P_n},\Psi\rangle
 \ge\frac{\rho mt}{64}.
\]
Using \eqref{eq:riesz-bounded}, we obtain
\begin{equation}\label{eq:halasz-planar}
 \|D_{\mathcal P_n}\|_1
 \ge\frac{\langle D_{\mathcal P_n},\Psi\rangle}
          {\|\Psi\|_\infty}
 \ge\frac{\rho\gamma}{64e^{\gamma^2/2}}\sqrt m
 \gtrsim\sqrt{1+\log n}.
\end{equation}

\subsection{A distributional estimate for Haar expansions}

The preceding arguments establish the lower bounds for $p\ge1$,
but do not directly extend to $0<p<1$. We now state a stronger distributional estimate based on the
decomposition from Section~\ref{sec:haar}, deferring its proof
to Section~\ref{sec:averaging}. The following theorem isolates its
essential features: prescribed coefficients on hereditary good
rectangles and arbitrary coefficients on bad rectangles.

Fix a principal level $\ell\ge0$ and a finite depth $S\ge0$. 
For $\ell\le k\le\ell+S$
and $0\le a\le k$, designate a collection
$\B_{k,a}\subseteq\D_{k,a}$ of bad rectangles.
The remaining rectangles are called good.

\begin{theorem}\label{thm:haar}
Let $0\le\delta<1$. Suppose that the good rectangles are
hereditary among the levels under consideration and
\begin{equation}\label{eq:principal-count}
 \#\B_{\ell,a}\le\delta\,2^\ell
       \qquad(0\le a\le\ell).
\end{equation}
Let
\begin{equation}\label{eq:f-form}
 f=g+\sum_{s=0}^{S}\sum_{a=0}^{\ell+s}
                    \sum_{R\in\D_{\ell+s,a}}\alpha_Rh_R,
\end{equation}
where $g$ is the sum of a finite real linear combination of
tensor Haar functions supported on rectangles of area greater
than $2^{-\ell}$ and arbitrary real measurable functions
$u(x)+v(y)$, finite almost everywhere. 
Assume
\begin{equation}\label{eq:good-coeff}
 \alpha_R=2^{-s}\quad\text{whenever }R\in\D_{\ell+s,a}
                                  \setminus\B_{\ell+s,a}.
\end{equation}
The bad coefficients are arbitrary real numbers. 
Then, for every integer $K\ge0$,
\begin{equation}\label{eq:variable-bound}
 \lambda_2\left\{|f|\ge
       \frac{\sqrt{(1-\delta)(\ell+1)}}{8\sqrt{K+1}}\right\}
 \ge \frac{1-\delta}{25}
       -\frac{64(K+1)(3K+7)}{9(1-\delta)4^K}.
\end{equation}
In particular, when $\delta\le1/2$,
\begin{equation}\label{eq:explicit-haar}
 \lambda_2\left\{|f|\ge
             \frac{\sqrt{\ell+1}}{8\sqrt{22}}\right\}
             \ge\frac1{100}.
\end{equation}
\end{theorem}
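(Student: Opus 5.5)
The plan is to reduce the distributional statement to an anti-concentration estimate for a sum of roughly $\ell+1$ independent signs. An averaging argument removes the coarse part $g$ and isolates the fine levels, which are then controlled in $L_2$. The paper announces an averaging lemma (Lemma~\ref{lem:averaging}), and I would build it as follows.

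First, partition $[0,1)^2$ into the cells of the grid of side lengths $2^{-(\ell+K+1)}$ in both coordinates; more precisely, use the fixed binary digits of $x$ and $y$ beyond position $\ell+K+1$. Every Haar function in $g$ lives on a rectangle of area $>2^{-\ell}$, so in each coordinate it uses at most its first $\ell$ digits jointly. The one-variable parts $u(x)+v(y)$ are arbitrary, which is the most dangerous feature. To neutralise them, I would use a double-difference (rectangle) trick: for points $(x,y),(x',y),(x,y'),(x',y')$, the alternating sum $f(x,y)-f(x',y)-f(x,y')+f(x',y')$ kills $u$ and $v$. If this combination is large, then $|f|$ is large at one of the four points. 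Choosing $x'$ and $y'$ as measure-preserving digit flips, the measure of $\{|f|\ge u\}$ is at least $\tfrac14$ of the measure of $\{|\square f|\ge 4u\}$, where $\square$ denotes the double difference. Flipping a digit of $x$ annihilates every Haar term whose $x$-side does not see that digit. Hence, by averaging over suitable random flips of the digits $a+1$ of $x$ and $\ell-a+1$ of $y$ for the principal shapes, one can arrange that $\square f$ kills most of the coarse contribution of $g$. The part that survives is essentially $\square$ applied to $H_\ell$, plus fine levels and bad corrections.

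Second, with the coarse part handled, write the surviving function as $X+Y$. Here $X$ collects the good principal-level terms; it is a sum of $\ell+1$ independent Rademacher products $\varepsilon_{a+1}(x)\varepsilon_{\ell-a+1}(y)$, restricted to good rectangles. The remainder $Y$ collects levels $\ell+1,\dots,\ell+S$ and the bad terms. For the levels $s\le K$ I would keep them and treat the first $K+1$ levels jointly. The geometric decay $2^{-s}$ of good coefficients gives an $L_2$ tail for levels $s>K$ of order $\sum_{s>K}4^{-s}(\ell+s+1)$, which explains the term $(K+1)(3K+7)4^{-K}$. The bad rectangles have arbitrary coefficients, and heredity is the key tool against them. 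On the complement of the union of bad principal rectangles, of measure $\ge 1-\delta$ per shape, every finer rectangle is good, so the bad terms vanish there.

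Third, apply a Paley--Zygmund type estimate to the restricted good sum. Let $M$ be the number of good principal shapes at a point, as in Subsection~\ref{subsec:schmidt}; on average $M\ge(1-\delta)(\ell+1)$. Conditional on the coarse digits, the good part is a Rademacher sum with about $M$ terms. Khintchine's fourth-moment bound $\E X^4\le3(\E X^2)^2$ then gives $\Prob(|X|\ge\tfrac12\sqrt{\E X^2})\ge\tfrac{9}{16}\cdot\tfrac13$. Combining this with Chebyshev on the tail $Y$ produces a bound of the form (\ref{eq:variable-bound}), with threshold $\sqrt{(1-\delta)(\ell+1)}/(8\sqrt{K+1})$. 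The factor $\sqrt{K+1}$ arises from pigeonholing among the $K+1$ retained levels, or equivalently from Cauchy--Schwarz when averaging over them. Finally, (\ref{eq:explicit-haar}) follows by choosing $K=21$ with $\delta=1/2$ and checking the numerics.

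I expect the main obstacle to be the averaging step. The difficulty is making the digit-flip differences simultaneously kill $g$, including cross terms of $g$ that share digits with principal rectangles, while preserving independence of the principal signs. It must also do this without letting the large bad coefficients at finer levels leak onto good regions. Heredity is exactly what should control the leakage, but the bookkeeping of which flips are admissible on which cells is delicate.
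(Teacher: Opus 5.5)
There is a genuine gap, and it sits in the step you flag as the main obstacle: your plan has no mechanism that tolerates terms of arbitrary size. The coefficients of the coarse mixed Haar terms in $g$ and the bad coefficients $\alpha_R$ are unrestricted. So the remainder you call $Y$ has no $L_2$ bound, and Chebyshev cannot be applied to it. Paley--Zygmund only shows that the good principal sum is large on a set of positive measure, and an arbitrary remainder correlated with it can cancel it there.

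Heredity does not give you a large set on which the bad terms vanish. A bad rectangle of level $\ell+s$ and shape $(a,b)$ forces only its principal ancestors of shapes $a'\in[\ell-b,a]$ to be bad. Hence all bad terms vanish only off the union, over all $\ell+1$ shapes, of the bad principal sets, and this complement can have measure tending to $0$. For instance, take $\delta=1/2$ and let the bad set of shape $a\ge1$ be $\{\varepsilon_a(x)=1\}$; then for large $\ell$, two adjacent bad shapes occur at almost every point, and level-$(\ell+1)$ bad terms may live there.

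Likewise, the good principal part is not a conditionally independent Rademacher sum given the coarse digits. Whether shape $a'$ is good can depend on the $x$-digit $a+1$, which carries the sign of every shape $a<a'$.

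Finally, the double difference removes $u(x)+v(y)$ but not the coarse mixed terms; it merely turns $\phi(x)\psi(y)$ into $(\phi(x)-\phi(x'))(\psi(y)-\psi(y'))$. With flips of single digits it also annihilates all but one term of $H_\ell$. It also evaluates $f$ at four points with different good/bad patterns, so the hypotheses are not preserved.

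What is needed is an estimate that is insensitive to the size of these terms, and the paper gets it from the characteristic function rather than from moments. One retains every $(K+1)$st principal pair $X_i=\xi_{p_i}$, $Y_i=\eta_{p_i}$ and puts all other digits into $W$. One then checks that every term of $G=f-\mathcal R_K$ other than $\sum_iX_iY_i$ falls into one of two classes:
\begin{itemize}
\item a function of $(W,X_{\le j},Y_{>j})$ for some $j$. This covers $g$ (including $u$ and $v$), the regular levels $1\le s\le K$ (thanks to the spacing of the retained indices), and part of the bad terms;
\item an interaction $C_{k,j}(W,X_{\le j},Y_{\ge k})$ that vanishes wherever some position $i\in[k,j]$ is good. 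This is the precise form in which heredity is used.
\end{itemize}
Lemma~\ref{lem:averaging} then averages over the $Y$-signs up to the last good position, gaining a factor $\kappa(t)^2$ at each good position. It yields $|\E e^{itG}|^2\le\delta+(1-\delta)\kappa(t)^{2N}$ no matter how large the arbitrary terms are, and the $1-\cos$ argument converts this into the distributional bound.

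Only the regular tail $s>K$ is treated by Chebyshev. The factor $\sqrt{K+1}$ comes from having $N=\lceil m/(K+1)\rceil$ retained pairs, not from pigeonholing over levels. Also, with $\delta=1/2$ the explicit bound requires $K=10$; your choice $K=21$ gives a threshold smaller by a factor $\sqrt2$.
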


\begin{remark}\label{rem:counts}
Apart from heredity, no restriction is imposed on the finer
bad collections. For every fixed $\delta<1$, the right-hand
side of \eqref{eq:variable-bound} is positive for all
sufficiently large $K$. Thus the theorem gives a positive
measure bound whenever the principal bad proportion is
bounded away from one.
\end{remark}

To apply Theorem~\ref{thm:haar} to the full Haar expansion of the
discrepancy function, we will pass to the limit in its finite
truncations using the following elementary observation.

\begin{proposition}\label{prop:limits}
Let $f_q\to f$ in measure on $[0,1]^2$. If, for some
$z>0$ and $\eta>0$,
\[
 \lambda_2\{|f_q|\ge z\}\ge\eta
 \qquad\text{for every }q,
\]
then
\[
 \lambda_2\{|f|\ge z\}\ge\eta.
\]
In particular, for fixed $\ell$ and $\delta$, every bound in
Theorem~\ref{thm:haar} passes to limits in measure of finite
expansions satisfying its hypotheses.
\end{proposition}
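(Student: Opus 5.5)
The plan is a short $\varepsilon$-argument using only the definition of convergence in measure and monotonicity of Lebesgue measure. Fix $\varepsilon\in(0,z)$. Split according to whether $f$ and $f_q$ are close:
\[
 \{|f_q|\ge z\}\subseteq\{|f|\ge z-\varepsilon\}\cup\{|f_q-f|\ge\varepsilon\}.
\]
Taking measures gives
\[
 \lambda_2\{|f|\ge z-\varepsilon\}\ge\eta-\lambda_2\{|f_q-f|\ge\varepsilon\}.
\]
Letting $q\to\infty$, convergence in measure sends the last term to zero, so
\[
 \lambda_2\{|f|\ge z-\varepsilon\}\ge\eta
\]
for every $\varepsilon\in(0,z)$.

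The only delicate point is passing from the threshold $z-\varepsilon$ back to $z$; I expect this to be the main obstacle. The sets $\{|f|\ge z-\varepsilon\}$ decrease as $\varepsilon\downarrow0$, and their intersection over $\varepsilon=1/j$ is exactly $\{|f|\ge z\}$. Since $\lambda_2$ is a finite measure on the unit square, continuity from above applies and gives $\lambda_2\{|f|\ge z\}\ge\eta$. The nonstrict inequality in the definition of the level set is what makes this intersection identity hold; with a strict inequality one would have to handle it differently.

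For the ``in particular'' clause, fix $\ell$ and $\delta$. Each finite expansion $f_q$ satisfying the hypotheses of Theorem~\ref{thm:haar} obeys \eqref{eq:variable-bound} with the same threshold
\[
 z=\frac{\sqrt{(1-\delta)(\ell+1)}}{8\sqrt{K+1}}
\]
and the same right-hand side $\eta$. These depend only on $\ell$, $\delta$, $K$, and not on $S$ or on the bad coefficients. If that right-hand side is nonpositive, the conclusion is trivial. Otherwise the first part of the proposition applies directly. The same reasoning covers \eqref{eq:explicit-haar}.

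In the intended application, $f_S\to f$ in $L_2$ by \eqref{eq:disc-truncated-decomp}. Convergence in $L_2$ implies convergence in measure by Chebyshev's inequality, so the hypothesis of the proposition is met.
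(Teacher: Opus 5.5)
Your proof is correct and matches the paper's: the same $\varepsilon$-splitting inclusion, then $q\to\infty$ using convergence in measure, then $\varepsilon\downarrow0$, where you spell out the continuity-from-above step the paper leaves implicit. Your use of $\{|f_q-f|\ge\varepsilon\}$ rather than the paper's strict set $\{|f_q-f|>\varepsilon\}$ makes no difference, and your remark that the thresholds and right-hand sides in Theorem~\ref{thm:haar} are independent of $S$ and of the bad coefficients correctly justifies the ``in particular'' clause.
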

\begin{proof}
Suppose $f_q\to f$ in measure and
$\lambda_2\{|f_q|\ge z\}\ge\eta$ for every $q$.
For every $0<\varepsilon<z$,
\[
 \lambda_2\{|f|\ge z-\varepsilon\}
 \ge\eta-\lambda_2\{|f_q-f|>\varepsilon\}.
\]
Letting $q\to\infty$ and then $\varepsilon\downarrow0$
gives $\lambda_2\{|f|\ge z\}\ge\eta$.
\end{proof}

\subsection{Application to the discrepancy function}

\begin{proof}[Proof of Theorem~\ref{thm:disc}]
Use the level $\ell$, the number $\rho$, and the rescaled
function $f=-16D_{\mathcal P_n}/\rho$ from Section~\ref{sec:haar}.
As shown there, the good rectangles are hereditary,
the principal bad proportion is less than $1/2$, and
the coefficients on good rectangles have the values
required by Theorem~\ref{thm:haar}.

The truncations $f_S$ from \eqref{eq:disc-truncated-decomp}
therefore satisfy \eqref{eq:explicit-haar}.
Since $f_S\to f$ in $L_2$, hence in measure,
Proposition~\ref{prop:limits} gives
\[
 \lambda_2\left\{|f|\ge
       \frac{\sqrt{\ell+1}}{8\sqrt{22}}\right\}
 \ge\frac1{100}.
\]
Rescaling and using $\rho\ge1/4$, we obtain
\begin{equation}\label{eq:disc-ell-bound}
 \lambda_2\left\{|D_{\mathcal P_n}|\ge
       \frac{\sqrt{\ell+1}}{512\sqrt{22}}\right\}
 \ge\frac1{100}.
\end{equation}
Finally, \eqref{eq:n-ell} implies
\[
 \log n<(\ell-1)\log2\le\ell,
 \qquad 1+\log n\le\ell+1.
\]
This proves Theorem~\ref{thm:disc}, with
$c=1/(512\sqrt{22})$ and $\eta=1/100$.
\end{proof}

No attempt has been made to optimize these constants.

\section{Proof of the distributional estimate}
\label{sec:averaging}

\subsection{An averaging lemma with hereditary interactions}
\label{subsec:averaging}

The following lemma gives a distributional lower bound for a sum
whose principal terms are products $X_jY_j$ of independent random
signs. We allow additional terms with a prescribed dependence on
these signs, including interactions that vanish whenever they
cross a good position. The proof averages successively over the
$Y$ variables and uses an $L_2$ contraction at each good position.

The additional random variable $W$ allows the terms and the
good positions to depend on randomness other than the displayed
signs. It is independent of those signs and may be viewed as a
parameter when taking the finite averages over them. In the
application to Haar expansions, $W$ collects the binary digits
not selected as $X_j$ or $Y_j$. The estimates are then integrated
over $W$; the bounds on the probabilities of bad positions are
required only for the full probability space, not for each
fixed value of $W$.

We write $X=(X_1,\ldots,X_N)$, $Y=(Y_1,\ldots,Y_N)$,
$X_{\le j}=(X_1,\ldots,X_j)$ and
$Y_{>j}=(Y_{j+1},\ldots,Y_N)$, with the analogous conventions for
strict initial and closed terminal segments. A segment may be empty.

\begin{lemma}\label{lem:averaging}
Let $N\ge1$, and let $X_1,\ldots,X_N,Y_1,\ldots,Y_N$ be independent
uniform signs, independent also of $W$. Suppose the real random variable
$F$ has the representation
\begin{equation}\label{eq:ordered}
 F=A_0(W,Y)+\sum_{j=1}^N\left(
 A_j(W,X_{\le j},Y_{>j})+X_jY_j
 +\sum_{k=1}^j C_{k,j}(W,X_{\le j},Y_{\ge k})\right).
\end{equation}
Let $\mathcal W$ denote the measurable space in which $W$ takes
values. The functions
\[
\begin{aligned}
 A_0&:\mathcal W\times\{-1,1\}^{N}\longrightarrow\mathbb R,\\
 A_j&:\mathcal W\times\{-1,1\}^{j}
                    \times\{-1,1\}^{N-j}\longrightarrow\mathbb R
                    &&(1\le j\le N),\\
 C_{k,j}&:\mathcal W\times\{-1,1\}^{j}
                       \times\{-1,1\}^{N-k+1}\longrightarrow\mathbb R
                    &&(1\le k\le j\le N)
\end{aligned}
\]
are measurable. Their arguments are ordered as in
\eqref{eq:ordered}, and $\{-1,1\}^{0}$ is understood to be
a singleton.
Fix $0\le\delta<1$. Let $\Omega_i$ be measurable
with respect to $\sigma(W,X_{<i},Y_{>i})$, with
$\Prob(\Omega_i)\le\delta$. Suppose
\begin{equation}\label{eq:crossing-vanish}
 \ind_{\Omega_i^c}C_{k,j}=0
              \qquad(1\le k\le i\le j\le N).
\end{equation}
We call position $i$ good on $\Omega_i^c$ and bad on $\Omega_i$.
Thus an interaction $C_{k,j}$ vanishes whenever a position
between $k$ and $j$, inclusive, is good.
Then, for every real $t$,
\begin{equation}\label{eq:char}
 |\E e^{itF}|^2\le
 \delta+(1-\delta)\kappa(t)^{2N},
 \qquad \kappa(t)=\max\{|\cos t|,|\sin t|\}.
\end{equation}
Furthermore,
\begin{equation}\label{eq:lemma-measure}
 \Prob\left\{|F|\ge\frac{\sqrt{(1-\delta)N}}4\right\}
              \ge\frac{1-\delta}{25}.
\end{equation}
\end{lemma}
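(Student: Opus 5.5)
The plan is to prove the characteristic function bound \eqref{eq:char} by a successive $L_2$ contraction over the positions, and then to derive \eqref{eq:lemma-measure} from \eqref{eq:char} by an elementary Fourier argument at a small frequency. Throughout we condition on $W$ and treat it as a parameter. The bound \eqref{eq:char} will first be obtained in the form
\[
 |\E e^{itF}|^2\le \E\,\kappa(t)^{2G},\qquad G=\sum_{i=1}^N\ind_{\Omega_i^c}.
\]
Since $x\mapsto\kappa^{2x}$ is convex, $\kappa^{2G}\le1-\frac GN(1-\kappa^{2N})$ for $0\le G\le N$. Moreover $\E G\ge(1-\delta)N$. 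Together these give $\E\kappa^{2G}\le\delta+(1-\delta)\kappa^{2N}$. This explains why only the total probability of the bad events matters, and why the dependence between the $\Omega_i$ is harmless.

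For the contraction, note that the state at step $j$ is $Z_j=(W,X_{\le j},Y_{>j})$. Passing from $Z_{j-1}$ to $Z_j$ exchanges $Y_j$ for $X_j$, and this exchange is coupled only through $X_jY_j$ and through the interactions $C_{k,j}$ crossing $j$. I would define recursively complex functions $\psi_j$ with $|\psi_j|\le1$:
\[
 \psi_0=e^{itA_0(W,Y)},\qquad
 \psi_j=\E\bigl[\psi_{j-1}\,e^{it(A_j+X_jY_j+\sum_{k\le j}C_{k,j})}\,\big|\,W,X_{\le j},Y_{>j},\ \text{history needed}\bigr].
\]
Here the ``history'' consists of the $Y_{\ge k}$ still needed by interactions that have not been cut. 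One then shows $\E e^{itF}=\E\psi_N$ and tracks the quantity $\E|\psi_j|^2$ weighted by $\kappa^{-2\#\{\text{good }i\le j\}}$.

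The key local computation is at a good position $i$, which is measurable in $(W,X_{<i},Y_{>i})$. There, by \eqref{eq:crossing-vanish}, no interaction involves both $Y_{\ge i}$ beyond $i$ and $X_{\ge i}$ in a way that crosses $i$, so the current function depends on $Y_i$ only through two values $h_\pm$. Averaging over $Y_i$ against $e^{itX_iY_i}$ and then over $X_i$ gives
\[
 \E_{X_i}\bigl|\tfrac12(h_+e^{itX_i}+h_-e^{-itX_i})\bigr|^2
 =\tfrac14(|h_+|^2+|h_-|^2)+\tfrac12\operatorname{Re}(h_+\bar h_-)\cos2t
 \le\kappa(t)^2\cdot\tfrac12(|h_+|^2+|h_-|^2).
\]
The terms $A_i$ and $A_j$ have modulus one and, by their measurability pattern, do not spoil the factorization. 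At bad positions we use only the trivial contraction $\E|\E[\cdot\mid\cdot]|^2\le\E|\cdot|^2$. Iterating, and using Jensen to pass from $|\E\psi_N|^2$ to $\E|\psi_N|^2$-type bounds, yields $|\E e^{itF}|^2\le\E\kappa^{2G}$.

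I expect this bookkeeping to be the main obstacle. Specifically, one must set up the filtration so that at each good $i$ the conditional expectation genuinely factors, with $Y_i$ appearing only in $X_iY_i$ and in terms already averaged. Bad positions must be allowed to carry arbitrary interactions $C_{k,j}$ across them. I would first try ordering the averaging as $Y_1$ then $X_1$, $Y_2$ then $X_2$, and so on, and writing $\E e^{itF}$ as an iterated inner product of operators on $L_2$ of the remaining $Y$'s. The conditions $\Omega_i\in\sigma(W,X_{<i},Y_{>i})$ and \eqref{eq:crossing-vanish} are exactly what make the good steps contractions by $\kappa$.

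For \eqref{eq:lemma-measure}, put $r=\sqrt{(1-\delta)N}/4$ and $t=c/\sqrt N$ with $c$ of order one, so that $\kappa(t)^{2N}=\cos^{2N}t\le e^{-c^2}$. Then
\[
 1-\sqrt{\delta+(1-\delta)e^{-c^2}}\le \E(1-\cos tF)\le \frac{t^2r^2}2+2\,\Prob\{|F|\ge r\}.
\]
Using $\sqrt{1-x}\le1-x/2$, the left side is at least $\frac{1-\delta}2(1-e^{-c^2})$. Meanwhile $t^2r^2/2=c^2(1-\delta)/32$. A fixed choice such as $c=1$ or $c=2$ then leaves $\Prob\{|F|\ge r\}\ge(1-\delta)/25$ after checking the numerics.
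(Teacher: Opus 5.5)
Your convexity step is correct: if $|\E e^{itF}|^2\le\E\kappa^{2G}$ held, \eqref{eq:char} would follow. The gap is that this intermediate bound is never established, and the mechanism you sketch for it fails. The issue is measurability: $\Omega_i\in\sigma(W,X_{<i},Y_{>i})$. So the indicators of earlier positions depend on $Y_j$, and those of later positions depend on $X_j$. The good-step contraction $\E_{X_j}|\mathsf K_tv|^2\le\kappa^2\E_{Y_j}|v|^2$, with $\mathsf K_tv(x)=\frac12\sum_y e^{itxy}v(y)$, therefore cannot be carried through a weight built from $G_j=\#\{i\le j:\Omega_i^c\text{ occurs}\}$.

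Take $N=2$, trivial $W$, $t=\pi/4$ (so $\kappa^2=\tfrac12$). First, let $\Omega_1=\{Y_2=1\}$, $\Omega_2=\varnothing$, $F=X_1Y_1\ind_{\{Y_2=-1\}}+X_2Y_2$. Then $\E[|\psi_1|^2\kappa^{-2G_1}]=1$. But $\psi_2=\tfrac12(e^{itX_2}+\cos t\,e^{-itX_2})$ has $|\psi_2|^2=\tfrac38$, so $\E[|\psi_2|^2\kappa^{-2G}]=\tfrac38\cdot3=\tfrac98>1$. Averaging out $Y_2$ destroys the correlation between small $|\psi_1|$ and large weight, so the weighted energy increases, and Cauchy--Schwarz yields only $\tfrac98\,\E\kappa^{2G}$.

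Second, let $\Omega_1=\varnothing$, $\Omega_2=\{X_1=1\}$, $F=1-Y_1+X_1Y_1+X_2Y_2\ind_{\{X_1=-1\}}$. Here $\mathsf K_t$ concentrates its output: $\psi_1=e^{i\pi/4}\ind_{\{X_1=1\}}$. Position $2$ is good exactly where $\psi_1$ already vanishes, and $\E|\psi_2|^2=\tfrac12>\tfrac38=\E\kappa^{2G}$. So neither the weighted nor the unweighted energy obeys your bound. In both examples $|\E e^{itF}|^2\le\E\kappa^{2G}$ does still hold, but your sketch gives no way to prove it in general, and the lemma does not need it.

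The missing idea is to abandon $G$ and pay for each bad position only through $\Prob(\Omega_j)\le\delta$, one step at a time. Let $L_j$ be the last good position in $\{1,\dots,j\}$ (or $0$), and let $V_j$ average $e^{itF_j}$ over $Y_1,\dots,Y_{L_j}$. Since $\{L_j=l\}\in\sigma(W,X_{\le j},Y_{>l})$, you get $\E V_j=\E e^{itF_j}$ and $|V_j|\le1$. On $\Omega_j$ one has $V_j=e^{it(F_j-F_{j-1})}V_{j-1}$. On $\Omega_j^c$, the identity $P_{j-1}e^{itF_{j-1}}=P_{j-1}V_{j-1}$, your two-point computation and conditional Jensen give $\E[\ind_{\Omega_j^c}|V_j|^2]\le\kappa^2\E[\ind_{\Omega_j^c}|V_{j-1}|^2]$. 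Hence $M_j=\E|V_j|^2$ satisfies
\[
 M_j\le\kappa^2M_{j-1}+(1-\kappa^2)\,\E\bigl[\ind_{\Omega_j}|V_{j-1}|^2\bigr]\le\kappa^2M_{j-1}+(1-\kappa^2)\delta,
\]
and induction from $M_0=1$ gives \eqref{eq:char}; this is the paper's argument.

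Your Fourier step for \eqref{eq:lemma-measure} has the right structure, with one slip. The identity $\kappa(t)=\cos t$ requires $t\le\pi/4$, which fails for small $N$ when $c=1$ or $c=2$; for $N=1$, $c=1$ one has $\kappa^2=\sin^21>e^{-1}$. Take $c=1/2$ instead. The same computation then gives about $0.051(1-\delta)\ge(1-\delta)/25$.
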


The proof has a martingale flavour. Averaging over successive
$Y$ variables gives conditional expectations with respect to
a decreasing family of $\sigma$-algebras. At each stage, we
average only up to the last good position. The event selecting
that position does not depend on the signs being averaged out,
so this choice preserves expectations. An $L_2$ contraction
at good positions, together with preservation of the squared
modulus at bad positions, then yields the required estimate.

\begin{proof}
Let $F_j$ denote the partial sum in \eqref{eq:ordered} through index
$j$, with $F_0=A_0(W,Y)$. Let $P_l$ average only
$Y_1,\ldots,Y_l$, keeping $W$, all $X$ variables, and $Y_{>l}$ fixed;
$P_0$ is the identity operator.
At each step, we average the $Y$ variables up to the last
good position. This preserves the expectation while allowing
an $L_2$ contraction at each new good position.
For $0\le j\le N$, define
\begin{equation}\label{eq:last-good}
 L_j=\max\bigl(\{i\in\{1,\ldots,j\}:\Omega_i^c\text{ occurs}\}
                        \cup\{0\}\bigr).
\end{equation}
If $l\ge1$, the event $\{L_j=l\}$ equals
$\Omega_l^c\cap\bigcap_{i=l+1}^j \Omega_i$; for $l=0$ it equals
$\bigcap_{i=1}^j\Omega_i$. Hence
\begin{equation}\label{eq:last-measurable}
 \{L_j=l\}\in\sigma(W,X_{\le j},Y_{>l}).
\end{equation}
In particular, this event does not depend on the $Y$ variables
averaged by $P_l$.

Define
\begin{equation}\label{eq:V}
 V_j=\sum_{l=0}^j\ind_{\{L_j=l\}}P_l e^{itF_j},
 \qquad M_j=\E|V_j|^2.
\end{equation}
These quantities are well-defined even when $F$ is not integrable,
since its complex exponential has modulus one. We have
\begin{equation}\label{eq:V-properties}
 |V_j|\le1,\qquad \E V_j=\E e^{itF_j},\qquad M_0=1.
\end{equation}
The middle equality follows from \eqref{eq:last-measurable} by
averaging separately on each event $\{L_j=l\}$.

We prove the recurrence
\begin{equation}\label{eq:recurrence}
 M_j\le\kappa(t)^2M_{j-1}+(1-\kappa(t)^2)\delta.
\end{equation}

\emph{Bad positions.}
On $\Omega_j\cap\{L_{j-1}=l\}$ one has $L_j=l<j$.
For $l\ge1$, the position $l$ is good; therefore
\eqref{eq:crossing-vanish} implies $C_{k,j}=0$ there whenever
$k\le l$. Each remaining term of $F_j-F_{j-1}$ does not depend
on $Y_1,\ldots,Y_l$. The same assertion is vacuous when $l=0$.
Moreover, the event under consideration is itself independent of
$Y_1,\ldots,Y_l$. Factoring the increment out of the corresponding
average in \eqref{eq:V} gives
\begin{equation}\label{eq:bad-identity}
 V_j=e^{it(F_j-F_{j-1})}V_{j-1}\quad\text{on }\Omega_j.
\end{equation}
Thus the bad part preserves the squared modulus:
\begin{equation}\label{eq:bad-energy}
 \E[\ind_{\Omega_j}|V_j|^2]=\E[\ind_{\Omega_j}|V_{j-1}|^2].
\end{equation}

\emph{Good positions.}
On $\Omega_j^c$ all the terms $C_{k,j}$ vanish and $L_j=j$.
Because $\Omega_j$ depends on neither $X_j$ nor $Y_j$, and is also
independent of $Y_{<j}$, it follows that
\begin{equation}\label{eq:good-form}
 V_j=e^{itA_j}\E_{Y_j}\left[
 e^{itX_jY_j}\,P_{j-1}e^{itF_{j-1}}\right]
                 \quad\text{on }\Omega_j^c.
\end{equation}
The following conditional averaging identity is useful:
\begin{equation}\label{eq:tower}
 P_{j-1}e^{itF_{j-1}}=P_{j-1}V_{j-1}.
\end{equation}
To verify it, set $Z=e^{itF_{j-1}}$ and
$\Gamma_l=\{L_{j-1}=l\}$. Since $\Gamma_l$ does not depend on $Y_{\le l}$,
\[
 P_{j-1}(\ind_{\Gamma_l}P_l Z)
 =P_{j-1}P_l(\ind_{\Gamma_l}Z)
 =P_{j-1}(\ind_{\Gamma_l}Z).
\]
Summing over $l=0,\ldots,j-1$ proves \eqref{eq:tower}.

For a function $v$ on $\{-1,1\}$, define
\[
 (\mathsf K_tv)(x)=\frac12\sum_{y=\pm1}e^{itxy}v(y).
\]
On the two-dimensional Hilbert space with uniform probability measure,
its matrix is
\[
 \frac12\begin{pmatrix}e^{it}&e^{-it}\\e^{-it}&e^{it}\end{pmatrix}.
\]
The vectors $(1,1)$ and $(1,-1)$ are orthogonal eigenvectors, with
eigenvalues $\cos t$ and $i\sin t$, respectively. Hence
\begin{equation}\label{eq:operator}
 \|\mathsf K_t\|_{2\to2}=\kappa(t).
\end{equation}
For fixed $(W,X_{<j},Y_{>j})$, the function
$P_{j-1}V_{j-1}$ depends on $Y_j$ but not on $X_j$.
Apply \eqref{eq:operator} to \eqref{eq:good-form}, use
\eqref{eq:tower}, and then apply conditional Jensen. This gives
\begin{align}
 \E[\ind_{\Omega_j^c}|V_j|^2]
 &\le\kappa(t)^2\E[\ind_{\Omega_j^c}|P_{j-1}V_{j-1}|^2]\notag\\
 &\le\kappa(t)^2\E[\ind_{\Omega_j^c}|V_{j-1}|^2].
                                                        \label{eq:good-energy}
\end{align}
The independence of $\Omega_j$ from the variables being averaged is
essential in both inequalities.

Combining \eqref{eq:bad-energy} and \eqref{eq:good-energy}, we obtain
\[
 M_j\le\kappa(t)^2M_{j-1}
       +(1-\kappa(t)^2)\E[\ind_{\Omega_j}|V_{j-1}|^2]
 \le\kappa(t)^2M_{j-1}+(1-\kappa(t)^2)\delta.
\]
This proves \eqref{eq:recurrence}. Starting from $M_0=1$, induction gives
\[
 M_N\le\delta+(1-\delta)\kappa(t)^{2N}.
\]
By \eqref{eq:V-properties} and Cauchy--Schwarz,
$|\E e^{itF}|^2=|\E V_N|^2\le M_N$, proving \eqref{eq:char}.

To deduce \eqref{eq:lemma-measure}, put $\theta=1-\delta$ and
$t=1/(2\sqrt N)$. Then $0<t\le1/2$, so $\kappa(t)=\cos t$.
The inequality $\cos t\le e^{-t^2/2}$ on this interval implies
\[
 \kappa(t)^{2N}\le e^{-1/4}<\frac45,
 \qquad |\E e^{itF}|\le\sqrt{1-\theta/5}\le1-\theta/10.
\]
Consequently,
\begin{equation}\label{eq:cos-defect}
 \E(1-\cos(tF))\ge \theta/10.
\end{equation}
Set $z=\sqrt{\theta N}/4$. On $\{|F|<z\}$,
$1-\cos(tF)\le t^2z^2/2=\theta/128$; everywhere it is at most $2$.
Thus \eqref{eq:cos-defect} yields
\[
 \Prob\{|F|\ge z\}\ge\frac12\left(\frac \theta{10}-\frac \theta{128}\right)
 =\frac{59\theta}{1280}\ge\frac \theta{25}.
\]
This completes the proof.
\end{proof}

\begin{remark}
The estimate $\Prob(\Omega_i)\le\delta$ in the lemma is unconditional:
no bound on $\Prob(\Omega_i\mid W)$ is required. All energy estimates
are integrated over $W$. This distinction is needed when the lemma
is applied after retaining only some of the Rademacher variables.
\end{remark}

\subsection{Proof of the Haar distributional estimate}

We now apply Lemma~\ref{lem:averaging} to prove
Theorem~\ref{thm:haar}. We retain suitably separated pairs of
Rademacher variables and include all remaining digits in the
auxiliary variable $W$. The main task is to express the Haar
expansion in the form \eqref{eq:ordered} and verify the
hereditary vanishing condition \eqref{eq:crossing-vanish}.

\begin{proof}[Proof of Theorem~\ref{thm:haar}]
Let $m=\ell+1$ and $\theta=1-\delta$. For $R\in\D_{\ell+s,a}$ put
$\beta_R=\alpha_R-2^{-s}$. Thus $\beta_R=0$ on good rectangles.
Using the regular sums $H_k$ from \eqref{eq:H-definition}, put
\[
 \mathcal C=\sum_{s=0}^S\sum_{a=0}^{\ell+s}
              \sum_{R\in\B_{\ell+s,a}}\beta_Rh_R.
\]
Then
\begin{equation}\label{eq:uniform-decomp}
 f=g+\sum_{s=0}^S2^{-s}H_{\ell+s}+\mathcal C.
\end{equation}
The sum $\mathcal C$ contains every bad correction, without alteration
or truncation.

Recall the Rademacher representation \eqref{eq:H-rademacher}.
For the principal level write
\begin{equation}\label{eq:original-signs}
 \xi_i=\varepsilon_i(x),\qquad \eta_i=\varepsilon_{m+1-i}(y)\quad(1\le i\le m).
\end{equation}
These $2m$ signs are independent and uniform, and
$H_\ell=\sum_{i=1}^m\xi_i\eta_i$.

Fix an integer $K\ge0$ and retain every $(K+1)$st principal
pair of signs $(\xi_i,\eta_i)$. This ensures that
no regular term from the first $K$ finer levels contains
two retained signs. The remaining regular levels will be
estimated in $L_2$.
Specifically, retain the indices
\[
 p_i=1+(i-1)(K+1),\qquad 1\le i\le N,
 \qquad N=\left\lceil\frac{m}{K+1}\right\rceil.
\]
Set $X_i=\xi_{p_i}$ and $Y_i=\eta_{p_i}$.
Let $W$ consist of all the other binary digits of $x$ and $y$,
i.e., all digits excluding those at positions $p_i$ in $x$
and $m+1-p_i$ in $y$, $1\le i\le N$.

Except on the null set of ambiguous binary expansions, this identifies
Lebesgue measure with the product of the retained independent signs
and $W$. It also accommodates the measurable summands $u(x)$ and $v(y)$.

Remove the regular tail only:
\begin{equation}\label{eq:tail-def}
 \mathcal R_K=\sum_{s=K+1}^S2^{-s}H_{\ell+s},\qquad G=f-\mathcal R_K,
\end{equation}
where the sum is zero if $S\le K$.
We verify the representation \eqref{eq:ordered} and
the vanishing condition \eqref{eq:crossing-vanish} for $G$.

\emph{The principal term.}
The retained terms of $H_\ell$ are $\sum_{i=1}^NX_iY_i$.
Its other terms depend only on $W$ and are included in $A_0$.

\emph{The remaining regular finer levels.}
Consider $H_{\ell+s}$ with $1\le s\le\min\{K,S\}$.
A summand in \eqref{eq:H-rademacher} is
$\varepsilon_j(x)\varepsilon_{m+s+1-j}(y)$.
If both factors have indices represented in \eqref{eq:original-signs},
it is $\xi_j\eta_{j-s}$. For $1\le s\le K$, these two distinct
indices cannot both be retained, since distinct retained indices are
at least $K+1$ apart. If a factor has an index outside the principal
range, it is already a component of $W$. Thus every term is constant
or depends on a single retained sign, with a coefficient depending on
$W$. Such terms can be included among the $A_i$: a function of $X_i$
can be assigned to $A_i$, and a function of $Y_i$ to $A_{i-1}$.

\emph{Coarser terms.}
We now consider the terms collected in $g$: the tensor Haar
terms supported on rectangles of area greater than $2^{-\ell}$
and the one-variable summands $u(x)$ and $v(y)$.
For a rectangle with side lengths $2^{-a}$ and $2^{-b}$,
$h_R$ depends only on the first $a+1$ binary digits of $x$ and the
first $b+1$ binary digits of $y$. In the ordering
\eqref{eq:original-signs}, it is therefore a function of $W$ and the signs
\begin{equation}\label{eq:rectangle-dependence}
 \{X_i:p_i\le a+1\}\quad\text{and}\quad
 \{Y_i:p_i\ge m-b\}.
\end{equation}
If $a+b<\ell=m-1$, then $a+1<m-b$, so the two
retained segments in \eqref{eq:rectangle-dependence}
are disjoint. Thus there is an index $j$ such that
the term depends only on $(W,X_{\le j},Y_{>j})$,
and it can be included in $A_j$.
Also $u(x)$ depends only on $(W,X_{\le N})$,
and $v(y)$ only on $(W,Y_{\ge1})$; they can
be assigned to $A_N$ and $A_0$.

\emph{Bad corrections.}
Consider a term $\beta_Rh_R$ with $\beta_R\ne0$ and side lengths
$2^{-a},2^{-b}$. Put
\[
 j=\max\{i:p_i\le a+1\},\qquad
 k=\min\{i:p_i\ge m-b\},
\]
with the conventions $j=0$ or $k=N+1$ when the corresponding
set is empty. By \eqref{eq:rectangle-dependence}, the term is
measurable with respect to $(W,X_{\le j},Y_{\ge k})$.
If $j<k$, then $Y_{\ge k}$ is a subcollection of $Y_{>j}$,
so the term depends only on $(W,X_{\le j},Y_{>j})$ and is
included in $A_j$. If $k\le j$, it is included in $C_{k,j}$.

This completes the decomposition of $G$ in the form
\eqref{eq:ordered}. It remains to define the bad events
$\Omega_i$ and verify their measurability, probability bounds,
and the vanishing condition \eqref{eq:crossing-vanish}.

For each retained index $i$, let $\Omega_i$ be the event that the
principal rectangle of shape
\[
 |I|=2^{-(p_i-1)},\qquad |J|=2^{-(m-p_i)}
\]
containing $(x,y)$ is bad. Membership in that rectangle
depends only on the first $p_i-1$ digits of $x$ and the first
$m-p_i$ digits of $y$. Therefore
\begin{equation}\label{eq:events-measurable}
 \Omega_i\in\sigma(W,X_{<i},Y_{>i}),\qquad
 \Prob(\Omega_i)=2^{-\ell}\#\B_{\ell,p_i-1}\le\delta.
\end{equation}
If $k\le i\le j$, then $p_i\le a+1$ and $p_i\ge m-b$.
Consequently, $R$ is contained in its unique principal ancestor
of the shape associated with $p_i$. Since $\beta_R\ne0$, $R$ is
bad, so this ancestor must be bad by heredity.
It follows that
\[
 \ind_{\Omega_i^c}\beta_Rh_R=0\qquad(k\le i\le j).
\]
Summing over all terms assigned to $C_{k,j}$ gives
\[
 \ind_{\Omega_i^c}C_{k,j}=0
 \qquad(k\le i\le j).
\]
This verifies \eqref{eq:crossing-vanish}. All remaining dependence
on the unretained digits is included in $W$.

Lemma~\ref{lem:averaging} now applies to $G$. Since $N\ge m/(K+1)$,
\begin{equation}\label{eq:core-bound}
 \lambda_2\left\{|G|\ge
       \frac{\sqrt{\theta m}}{4\sqrt{K+1}}\right\}\ge\frac \theta{25}.
\end{equation}
For the tail, orthogonality gives $\|H_{\ell+s}\|_2^2=m+s$,
and different levels are orthogonal. Hence
\begin{equation}\label{eq:tail-energy}
 \|\mathcal R_K\|_2^2\le\sum_{s=K+1}^{\infty}(m+s)4^{-s}
       \le m\sum_{s=K+1}^{\infty}(s+1)4^{-s}.
\end{equation}
With $z_K=\sqrt{\theta m}/(8\sqrt{K+1})$, Chebyshev's inequality gives
\[
 \lambda_2\{|\mathcal R_K|\ge z_K\}\le
 \frac{64(K+1)}\theta\sum_{s=K+1}^{\infty}(s+1)4^{-s}.
\]
Since
\[
 \{|G|\ge2z_K\}\setminus\{|\mathcal R_K|\ge z_K\}
 \subseteq\{|f|\ge z_K\},
\]
these estimates and the identity
\[
 \sum_{s=K+1}^{\infty}(s+1)4^{-s}
 =\frac{3K+7}{9\cdot4^K}
\]
prove \eqref{eq:variable-bound}.

If $\delta\le1/2$, the hypotheses also hold with
$\delta=1/2$. Taking $K=10$ in
\eqref{eq:variable-bound}, we obtain
\[
 \lambda_2\left\{|f|\ge
    \frac{\sqrt m}{8\sqrt{22}}\right\}
 \ge\frac1{50}-\frac{407}{73728}
 >\frac1{100}.
\]
This proves \eqref{eq:explicit-haar}.
\end{proof}

\section*{Declaration of generative AI and AI-assisted technologies
in the manuscript preparation process}

The author formulated the main ideas and the proof strategy,
including the representation of Haar functions by random variables
and the use of an averaging lemma. Over several years, the author
made repeated attempts to implement this approach but was unable
to resolve the technical difficulties needed for a complete proof
of the result now stated as Lemma~\ref{lem:averaging}.
The part of the argument dealing with the incorporation of the
coarse Haar levels had already been developed by the author but
remained unpublished. ChatGPT (OpenAI) was used to develop the
technical details needed to incorporate the finer levels and
complete the proof within this framework, and to assist with
literature searches, the exposition, and the preparation of the
\LaTeX{} source. The author critically reviewed and revised the
generated material, verified the proofs and references, and takes
full responsibility for the content of the manuscript.

\end{document}